\theoremstyle{plain}
\theoremstyle{definition}
\newtheorem{theoremletter}{Theorem}
\newtheorem{propositionletter}[theoremletter]{Proposition}
\newtheorem{thm}{Theorem}[section]
\newtheorem{cor}[thm]{Corollary}
\newtheorem{lem}[thm]{Lemma}
\theoremstyle{definition}
\def\cleardoublepage{\clearpage\if@twoside \ifodd\c@page\else
	\hbox{}
	\thispagestyle{empty}
	\newpage
	\if@twocolumn\hbox{}\newpage\fi\fi\fi}
\DeclareMathOperator{\Aut}{Aut}
\DeclareMathOperator{\St}{St}
\DeclareMathOperator{\h}{hdim}
\def\G{\mathcal{G}}
\keywords{Groups acting on trees, branch groups, Hausdorff dimension}
\subjclass[2010]{Primary 20E08; Secondary 20E18, 28A78.}
\begin{document}
	
	\title[Hausdorff dimension of the second Grigorchuk group]{Hausdorff dimension of the second Grigorchuk group}

\author[M. Noce]{Marialaura Noce}
\address{Marialaura Noce: Department of Mathematical Sciences, University of Bath, Claverton Down, Bath BA2 7AY, United Kingdom}
\email{mnoce@unisa.it}

 \author[A. Thillaisundaram]{Anitha Thillaisundaram}
 
 \address{Anitha Thillaisundaram: School of Mathematics and Physics, University of Lincoln,
 	Brayford Pool, Lincoln LN6 7TS, United Kingdom}
 \email{anitha.t@cantab.net}

\thanks{The first author is supported by the Spanish Government grant MTM2017-86802-P, partly with FEDER funds, and by the “National Group for Algebraic and Geometric Structures and their
Applications” (GNSAGA - INdAM). She also acknowledges financial support from a London Mathematical Society Joint Research Groups in the UK (Scheme 3) grant. The second author acknowledges support from  EPSRC, grant EP/T005068/1.}

 \begin{abstract}
We show that the Hausdorff dimension of the closure of the second Grigorchuk group is $43/128$. Furthermore we establish that the second Grigorchuk group is super strongly fractal and that its automorphism group equals its normaliser in the full automorphism group of the tree.
 \end{abstract}
	
\maketitle

\section{Introduction}

Let $T$ be the $d$-adic tree,   for $d\ge 2$. The first Grigorchuk group \cite{GR}, more commonly known as the Grigorchuk group, was the earliest example of a regular branch group. The class of regular branch groups consists of subgroups of $\Aut(T)$ that mimic key properties of $\Aut(T)$; see Section~2 for precise definitions. The second Grigorchuk group was introduced in the same paper~\cite{GR}, and shares several interesting properties with the first Grigorchuk group, such as being infinite, periodic, finitely generated, just infinite, as well as possessing the congruence subgroup property, and having finitely many maximal subgroups only of finite index, see~\cite{Pervova,Pervova2}.

Regular branch groups  
have been studied extensively from various aspects over the past twenty years; see~\cite{Handbook} for a good introduction. The second Grigorchuk group however seems to have received less attention. We recall that the second Grigorchuk group acts on the $4$-adic tree, with generators $a$ and $b$, where $a$ cyclically permutes the four maximal subtrees rooted at the first level vertices, whereas $b$ fixes the first-level vertices pointwise and  is recursively defined by the tuple $(a,1,a,b)$ which corresponds to the action of~$b$ on the four maximal subtrees. The second Grigorchuk group was generalised by Vovkivsky~\cite{Vovkivsky} to the family of Grigorchuk-Gupta-Sidki (GGS-)groups acting on the $p^n$-adic tree, for $p$ any prime and $n\in\mathbb{N}$. Although the family of GGS-groups acting on the $p$-adic tree, for $p$ an odd prime, has been well studied (see for instance~\cites{Pervova3, Pervova4, FAZR2, FAGUA, Petschick, FT}), the more general GGS-groups, apart from in~\cite{Vovkivsky}, have only recently been considered in more depth (see~\cites{Elena-paper, DGT}).

In this note, we are interested in the Hausdorff dimension of regular branch groups acting on the  $4$-adic tree~$T$.  Let 
\[
\Gamma = \varprojlim_{n\in\mathbb{N}} C_{4} \wr \overset{n}\ldots \wr C_{4}
\]
be the group of all $4$-adic automorphisms, which is contained in a Sylow pro-$2$ subgroup of $\Aut(T)$. For $G\le \Gamma$, the Hausdorff dimension of the closure of $G$ in $\Gamma$ is given by
\[
\h_\Gamma (\overline{G})=\varliminf_{n\to \infty}\frac{\log| G:\St_G(n) |}{\log| \Gamma :\St_\Gamma(n)| }\in[0,1], 
\]
where $\varliminf$ represents the lower limit, and for a subgroup $H\le \Gamma$, the normal subgroup $\St_H(n)$ is the $n$th level stabiliser of~$H$ (also called the $n$th principal congruence subgroup of~$H$); we refer to Section 2 for further details. The Hausdorff dimension of $\overline{G}$ is a measure of how dense $\overline{G}$ is in $\Gamma$.  This was first applied by Abercrombie~\cite{Abercrombie}, Barnea and Shalev~\cite{BaSh97} in the more general setting of profinite groups.

Key results concerning the Hausdorff dimension of Grigorchuk-type groups were established in~\cites{AbVi05, Sunic, Siegenthaler, FAZR, FAZR2, FAGT}. It is proved in~\cite{NewHorizons} that the closure of the first Grigorchuk group has Hausdorff dimension $5/8$, however the computation of the Hausdorff dimension of the second Grigorchuk group does not appear to be recorded anywhere in the literature. Here, we close this gap.

\begin{theoremletter}\label{thm:main_intro}
Let $\mathcal{G}$ be the second Grigorchuk group acting on the $4$-adic tree~$T$. Then
\begin{enumerate}
    \item [(i)] the logarithmic orders of the congruence quotients of $\mathcal{G}$ are given by
    \[
    \log_4|\mathcal{G}:\St_{\mathcal{G}}(n)| =
    \begin{cases}
     1 & \mbox{ if } n=1, \\
     3 & \mbox{ if } n=2, \\
      \frac{1}{3}\big(  86\cdot4^{n-4} +4\big), & \mbox{ if } n>2;
    \end{cases}
    \]
    \item [(ii)] the Hausdorff dimension of the closure of $\mathcal{G}$ in $\Gamma$ is
    \[
    \h_\Gamma(\overline{\mathcal{G}})= 43/128.
    \]
\end{enumerate}
\end{theoremletter}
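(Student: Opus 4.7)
The plan is to reduce everything to part~(i), since (ii) follows by a direct limit once we observe that $\log|\Gamma:\St_\Gamma(n)|=\frac{4^n-1}{3}\log 4$ and substitute the formula from (i):
\[
\frac{\log|\mathcal{G}:\St_\mathcal{G}(n)|}{\log|\Gamma:\St_\Gamma(n)|}
= \frac{86\cdot 4^{n-4}+4}{4^n-1} \longrightarrow \frac{86}{4^4} = \frac{43}{128},
\]
and since the sequence converges, the $\varliminf$ is a genuine limit. The real content is therefore in~(i).

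For~(i), the approach is to exploit a regular branch structure for $\mathcal{G}$. My first step is to locate a finite-index subgroup $K\le\St_\mathcal{G}(1)$ satisfying $K\times K\times K\times K\le\psi(K)$, where $\psi\colon\St_\mathcal{G}(1)\hookrightarrow\mathcal{G}\times\mathcal{G}\times\mathcal{G}\times\mathcal{G}$ is the canonical embedding. Guided by analogous computations for the first Grigorchuk group and for GGS-groups on the $p$-adic tree, a natural candidate is a normal subgroup defined by commutators, such as $\langle[a,b]\rangle^{\mathcal{G}}$ or a small refinement of it; verifying the geometric inclusion $K^4\le\psi(K)$ amounts to exhibiting, for each chosen generator $x$ of $K$, an element of $K$ whose image under $\psi$ has $x$ in one coordinate and $1$ in the others. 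This is done by iterated commutator manipulations that exploit the recursive relation $b=(a,1,a,b)$.

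With such a $K$ in hand, I would compute the three key indices $|\mathcal{G}:\St_\mathcal{G}(1)|=4$ (transitivity on the first level), $|\St_\mathcal{G}(1):K|$ (from the finite quotient $\mathcal{G}/K$, which can be read off once generators of $K$ are listed), and $|\psi(K):K^4|$ (the cokernel of the branch embedding). These combine to give a linear recursion
\[
a_{n+1}=4a_n+c,\qquad a_n:=\log_4|\mathcal{G}:\St_\mathcal{G}(n)|,
\]
valid for $n\ge 3$, where the constant $c$ depends only on the branch data. Verifying the small-$n$ values $a_1=1$, $a_2=3$, $a_3=17/2$ directly, by enumerating the finite quotients $\mathcal{G}/\St_\mathcal{G}(n)$ for $n\le 3$, pins down the initial condition, and the unique solution of the recursion with $c=-4$ and $a_3=17/2$ is precisely the stated formula $\frac{1}{3}(86\cdot 4^{n-4}+4)$.

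I expect the principal obstacle to lie in the first step: identifying the subgroup $K$ and verifying $K^4\le\psi(K)$. The recursive rule $b=(a,1,a,b)$ activates two distinct coordinates non-trivially, unlike the GGS-groups on the $p$-adic tree whose defining vectors place the recursive entry alone in the last coordinate, so the commutator identities required to produce ``coordinate-isolating'' elements are correspondingly more delicate and will likely force $K$ to be something finer than the naive commutator closure. A secondary technical point is the explicit description of $\mathcal{G}/\St_\mathcal{G}(n)$ for $n\le 3$ needed to anchor the recursion; this is amenable to direct, if slightly tedious, enumeration.
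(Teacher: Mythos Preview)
Your outline matches the paper's strategy almost exactly: the paper also derives~(ii) from~(i) by the same limit, takes $K=\gamma_3(\mathcal{G})$ as the branching subgroup (your ``small refinement'' of the commutator closure is precisely one step down the lower central series), computes $|\mathcal{G}:\gamma_3(\mathcal{G})|=4^3$ and $|\St_{\mathcal{G}}(1):\gamma_3(\St_{\mathcal{G}}(1))|=4^7$ to obtain $|\mathcal{G}^4:\psi(\St_{\mathcal{G}}(1))|=4^5$, and hence the recursion $a_n=4a_{n-1}-4$ for $n\ge 4$.

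The one place where your plan understates the difficulty is the base case $a_3=17/2$. This is not a matter of ``direct, if slightly tedious, enumeration'': the quotient $\mathcal{G}/\St_{\mathcal{G}}(3)$ has order $2^{17}$, and the odd exponent already signals that naive coset counting will not suffice. The paper devotes the bulk of its technical work to this single value, introducing a \emph{partial weight} homomorphism $\St_{\mathcal{G}}(1)\to(\mathbb{Z}/4\mathbb{Z})^4$ to characterise $\St_{\mathcal{G}}(2)$ and $\St_{\mathcal{G}}(3)$, and then proving that $|\St_{\mathcal{G}}(2)^4:\psi(\St_{\mathcal{G}}(3))|=2$ (not a power of~$4$). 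The latter requires a genuine non-membership argument: one must show that $\psi^{-1}([b^{-1},a^2],1,1,1)\notin\mathcal{G}$, which is done by analysing $\St_{\mathcal{G}}(1)'/\gamma_3(\St_{\mathcal{G}}(1))$ as an explicit finite abelian group. So your sketch is correct in shape, but the ``secondary technical point'' is in fact the primary one.
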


We also prove the following, which is of independent interest. We refer the reader to Section 2 for undefined terms.
\begin{propositionletter}\label{prop:main}
Let $\mathcal{G}$ be the second Grigorchuk group acting on the $4$-adic tree~$T$. Then the following hold:
\begin{enumerate}[label=(\roman*)]
    \item the group $\mathcal{G}$ is super strongly fractal;
    \item the group $\Aut(\G)$ equals the normaliser of~$\G$ in $\Aut(T)$.
\end{enumerate}
\end{propositionletter}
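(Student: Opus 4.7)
For part~(i), I would begin by noting that self-similarity of $\G$ is immediate from the recursion $b = (a, 1, a, b)$. Strong fractality at level~$1$ is then verified directly: $\St_{\G}(1)$ is generated by the four conjugates $b, b^a, b^{a^2}, b^{a^3}$, whose tuples of sections are the cyclic shifts of $(a, 1, a, b)$, so that both $a$ and $b$ appear among the sections at every first-level vertex. To upgrade this to super strong fractality, I would use the following reduction: for a strongly fractal self-similar group, it suffices to verify the lifting property $\psi_v(\St_{\G}(n+1)) = \St_{\G}(n)$ for every first-level vertex $v$ and every $n\ge 0$. Indeed, factoring a level-$(n+1)$ vertex as $u = vw$ with $v$ of level~$1$ and $w$ of level~$n$ in the subtree at $v$, the identity $\psi_u = \psi_w\circ \psi_v$ together with the inductive hypothesis yields $\psi_u(\St_{\G}(n+1)) = \psi_w(\St_{\G}(n)) = \G$.

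The lifting property itself would be proved using the regular branch structure of $\G$: I would identify a non-trivial normal subgroup $K\trianglelefteq \G$ satisfying a branching inclusion of the form $K\times K\times K\times K \le \psi(K\cap \St_{\G}(1))$, and use it to modify the three unwanted sections of an arbitrary element of $\St_{\G}(1)$ --- whose $v$-section realises a prescribed element of $\St_{\G}(n)$ --- so that all four sections end up in $\St_{\G}(n)$, without disturbing the section at $v$. Pinning down $K$ explicitly, plausibly as a suitable commutator-type subgroup of $\G$ in the spirit of the GGS-group literature, and verifying the branching inclusion is the main computational step.

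For part~(ii), I would combine two ingredients. The first is a general rigidity theorem in the style of Lavreniuk--Nekrashevych, which asserts that for a just infinite branch group $G\le \Aut(T)$ satisfying an appropriate saturation condition, the conjugation map $N_{\Aut(T)}(G)\to \Aut(G)$ is surjective; since $\G$ is just infinite and regular branch, the remaining hypothesis should be checkable from the rigid-stabiliser structure underlying part~(i). The second ingredient is the direct verification of injectivity, i.e.\ $C_{\Aut(T)}(\G) = 1$: for $g = (g_1, g_2, g_3, g_4)\sigma \in C_{\Aut(T)}(\G)$, commutation with $a$ forces $\sigma$ to lie in the cyclic group generated by the permutation of $a$ and equates the four sections $g_1 = g_2 = g_3 = g_4 =: h$; commutation with $b$ then forces $\sigma$ to be trivial and $h\in C_{\Aut(T)}(\G)$. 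Iterating down the tree shows $g$ has trivial permutation at every vertex, hence $g = 1$. I expect the hardest step in the whole proof to be the explicit identification of the branching subgroup $K$ and verification of its branching inclusion, on which both super strong fractality in~(i) and the saturation hypothesis in~(ii) ultimately rest.
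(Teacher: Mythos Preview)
Your plan for (ii) is essentially the paper's: verify a saturation condition and apply the Lavreniuk--Nekrashevych rigidity theorem. The paper does this via the chain $H_0=\G$, $H_{n+1}=H_n'$, checking $\varphi_x(\G')=\G$ directly and inducting; your intention to extract saturation from the branching structure is in the same spirit. The centraliser argument you sketch is correct, though it is already subsumed in the cited theorem.

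For (i), your inductive reduction to the one-step lifting property $\varphi_v(\St_\G(n+1))=\St_\G(n)$ for first-level $v$ is valid and gives a tidy organisation different from the paper's, but your proposed proof of the lifting itself has a gap. Multiplying an arbitrary lift $g\in\St_\G(1)$ of a prescribed $h\in\St_\G(n)$ by an element of the form $(1,k_2,k_3,k_4)$ with $k_i\in K$ to force the remaining sections into $\St_\G(n)$ requires each unwanted section $g_i$ to lie in $K\cdot\St_\G(n)$; once $n$ is large enough that $\St_\G(n)\le K$ (for $K=\gamma_3(\G)$ this means $n\ge 3$) this becomes $g_i\in K$, which there is no reason to expect of an arbitrary lift. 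The repair is in fact simpler than modification: for $n\ge 3$ one has $h\in\St_\G(n)\le\gamma_3(\G)$, so branching gives $(h,1,1,1)\in\gamma_3(\G)^4\subseteq\psi(\St_\G(1))$ directly, and this element already lies in $\St_\G(n)^4=\psi(\St_\G(n+1))$. You would still need to treat $n=1$ and $n=2$ by explicit computation, which your plan does not address; the case $n=2$ is not entirely trivial, as it amounts to $\varphi_v(\St_\G(3))=\St_\G(2)$ and ultimately rests on the fact that $\psi(\St_\G(3))$ has index exactly~$2$ in $\St_\G(2)^4$. The paper sidesteps the one-step lifting altogether: it works two levels at a time via the specific normal subgroup $N=\langle[b,a,a^2]\rangle^\G\le\gamma_3(\G)\cap\St_\G(2)$, uses branching to get $N\subseteq\varphi_\omega(\St_\G(n+2))$ for every $\omega$ of length~$n$, and then checks by hand that $\varphi_{ux}(N)=\G$ for all $u,x\in X$.
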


\medskip

\emph{Organisation.} Section~2 of this paper consists of background material for regular branch groups. Section~3 contains properties of the second Grigorchuk group~$\mathcal{G}$, and includes the proof of Proposition~\ref{prop:main}. Theorem~\ref{thm:main_intro} is proved in Section~4.

\subsection*{Acknowledgements}
The first author thanks the University of Lincoln for its excellent hospitality while this paper was being written. Both authors are grateful to the referee for very helpful comments.

%%%

\section{Preliminaries}

For $d\ge 2$, let $T$ be the $d$-adic tree,
  meaning that there is a distinguished vertex called the root and all vertices have $d$ children.  Using the
  alphabet $X = \{1,\ldots,d\}$, the vertices $u_\omega$ of $T$ are
  labelled bijectively by elements $\omega$ of the free
  monoid~$X^*$ as follows: the root of~$T$
  is labelled by the empty word~$\varnothing$, and for each word
  $\omega \in X^*$ and letter $x \in X$ there is an edge
  connecting $u_\omega$ to~$u_{\omega x}$.  We say
  that $u_\omega$ precedes $u_\lambda$ whenever $\omega$ is a prefix of $\lambda$.

  A natural length function on~$X^*$ is defined as follows: the words
  $\omega$ of length $\lvert \omega \rvert = n$, representing vertices
  $u_\omega$ that are at distance $n$ from the root, are the $n$th
  level vertices and form the \textit{$n$th layer} of the tree. The elements of the boundary $\partial T$ correspond
  naturally to infinite simple rooted paths.

  Denote by $T_u$ the full rooted subtree of $T$ that has its root at
  a vertex~$u$ and includes all vertices succeeding~$u$.  For any
  two vertices $u = u_\omega$ and $v = u_\lambda$, the map
  $u_{\omega \tau} \mapsto u_{\lambda \tau}$, induced by replacing the
  prefix $\omega$ by $\lambda$, yields an isomorphism between the
  subtrees $T_u$ and~$T_v$.  

  Every automorphism of $T$ fixes the root and the orbits of
  $\mathrm{Aut}(T)$ on the vertices of the tree $T$ are precisely its
  layers. For $f \in \mathrm{Aut}(T)$, the image of a vertex $u$ under
  $f$ is denoted by~$u^f$.  Observe that $f$ induces a faithful action
  on the monoid~$X^*$ such that
  $(u_\omega)^f = u_{\omega^f}$.  For $\omega \in X^*$ and
  $x \in X$ we have $(\omega x)^f = \omega^f x'$ where $x' \in X$ is
  uniquely determined by $\omega$ and~$f$.  This induces a permutation
  $f(\omega)$ of $X$ so that
  \[
  (\omega x)^f = \omega^f x^{f(\omega)}, \qquad \text{and hence}
  \quad   (u_{\omega x})^f = u_{\omega^f x^{f(\omega)}}.
  \]
  The automorphism $f$ is \textit{rooted} if $f(\omega) = 1$ for
  $\omega \ne \varnothing$.  It is \textit{directed}, with directed
  path $\ell \in \partial T$, if the support
  $\{ \omega \mid f(\omega) \ne 1 \}$ of its labelling is infinite and
  marks only vertices at distance $1$ from the set of
    vertices corresponding  to the path~$\ell$.
    
    When convenient, we do not differentiate between $X^*$ and vertices of~$T$. The \textit{section} of $f$ at a vertex~$u$ is the unique automorphism
$f_u$ of $T \cong T_{|u|}$ given by the condition $(uv)^f = u^f
v^{f_u}$ for $v \in X^*$.

%%%

\subsection{Subgroups of $\Aut(T)$}
Let $G\le \Aut(T)$. For $G\le \Aut(T)$, the
\textit{vertex stabiliser} $\St_G(u)$ is the subgroup
consisting of elements in $G$ that fix the vertex~$u$.
 For
$n \in \mathbb{N}$, the \textit{$n$th level stabiliser}
  $\St_G(n)= \bigcap_{\lvert \omega \rvert =n}
  \St_G(u_\omega)$
is the subgroup consisting of automorphisms that fix all vertices at
level~$n$.  Denoting by $T_{[n]}$ the finite subtree of $T$ on
vertices up to level~$n$, we see that $\St_G(n)$ is equal to
the kernel of the induced action of $G$ on $T_{[n]}$.

The full automorphism group $\mathrm{Aut}(T)$ is a profinite group:
\[
\mathrm{Aut}(T)= \varprojlim_{n\to\infty} \mathrm{Aut}(T_{[n]})
\]
The topology of $\mathrm{Aut}(T)$ is defined by the open subgroups
$\St_{\mathrm{Aut}(T)}(n)$, for $n \in \mathbb{N}$.  A
subgroup~$G$ of $\mathrm{Aut}(T)$ has the \textit{congruence subgroup
  property} if for every subgroup~$H$ of finite index in~$G$, there
exists some $n$ such that $\mathrm{St}_G(n)\subseteq H$.

Each $g\in \St_{\mathrm{Aut}(T)} (n)$ can be 
  described completely in terms of its restrictions to the subtrees
  rooted at vertices at level~$n$.  Indeed, there is a natural
  isomorphism
\[
\psi_n \colon \St_{\mathrm{Aut}(T)}(n) \rightarrow
\prod\nolimits_{\lvert \omega \rvert = n} \mathrm{Aut}(T_{u_\omega})
\cong \mathrm{Aut}(T) \times \overset{d^n}{\ldots} \times
\mathrm{Aut}(T).
\]
For ease of notation, we write $\psi=\psi_1$.

Let  $\omega\in X^*$ be of length $n$. We further define 
\[
\varphi_\omega :\mathrm{St}_{\mathrm{Aut}(T)}(u_\omega) \rightarrow \mathrm{Aut}(T_{u_\omega}) \cong \mathrm{Aut}(T)
\]
to be the natural restriction of $f\in \St_{\Aut(T)}(u_\omega)$ to the section~$f_{u_\omega}$.

A group $G \leq \Aut(T)$ is said to be \emph{self-similar} if the images under $\varphi_{\omega}$ and $\psi_n$ are contained in $G$ and $G \times \overset{d^n}{\dots} \times G$, respectively.

We say that the group $G$ is \textit{fractal} if $\varphi_\omega(\text{St}_G(u_\omega))=G$ for every $\omega\in X^*$. 
Furthermore we say that the group $G$ is \emph{strongly fractal} if $\varphi_x(\text{St}_G(1))=G$ for every $x\in X$, and we say that the group $G$ is \emph{super strongly fractal} if, for each $n\in \mathbb{N}$, we have $\varphi_\omega(\text{St}_G(n))=G$ for every word  $\omega\in X^*$ of length~$n$. For more information on these definitions and examples of groups satisfying these properties, see~\cite{Jone}.

To end this section, we recall that a level-transitive (i.e.\ transitive on every layer of the tree) self-similar group~$G$  with $1 \not = K \leq \St_G(1)$ such that $K\times \overset{d}\ldots \times K \subseteq \psi(K)$ and
$\lvert G : K \rvert < \infty$, 
is said to be \emph{regular
  branch over~$K$}.

%%%

\section{Some properties of the second Grigorchuk group}
Let $T$ be the $4$-adic tree. The second Grigorchuk group $\mathcal{G} \leq \Aut(T)$ is generated by two automorphisms $a$ and $b$, where $a$ is the rooted automorphism corresponding to the cycle $(1 \ 2 \ 3 \ 4)$, and $b \in \St_{\mathcal{G}}(1)$ is recursively defined by $\psi(b)=(a,1,a,b)$. The group $\G$ is periodic~\cite{GR}, and from~\cite[Proof of Lem.~2.1]{Pervova}, it follows that $\mathcal{G}$ is  regular branch over $\gamma_3(\G)$ and furthermore
\[
\psi(\gamma_3(\St_{\mathcal{G}}(1))) = \gamma_3(\G)\times \gamma_3(\G)\times\gamma_3(\G)\times \gamma_3(\G).
\]
Also we have $\St_{\G}(3) \leq \gamma_3(\G)$; see~\cite[Lem.~3.2]{Pervova}.  Note that $\St_{\G}(2)\not\le \gamma_3(\G)$, since 
\[
\psi(b(b^{a^2})^{-1})=(1,b^{-1},1,b)\in\St_{\G}(2),
\]
but $b(b^{a^2})^{-1}=[b^{-1},a^2]\not\in\gamma_3(\G)$ by~\cite[Lem.~2.1]{Pervova}.

\begin{lem}\label{lem:first-properties}
We have $|\G:\gamma_3(\G)|=4^3$ and $|\St_{\G}(1):\gamma_3(\St_{\G}(1))|=4^7$.
\end{lem}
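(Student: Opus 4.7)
The plan is to treat both parts as computations inside class-$2$ nilpotent quotients. For part (i), set $N = \G/\gamma_3(\G)$. Since $N$ has class $2$, every element admits a normal form $\bar a^i \bar b^j [\bar a, \bar b]^k$; combined with $a^4 = b^4 = 1$ in $\G$ (a short recursive check together with faithfulness of the tree action) and the class-$2$ identity $[\bar a, \bar b]^4 = [\bar a^4, \bar b] = 1$, this already gives $|N| \le 4^3$. The crux is to show that $\gamma := [\bar a, \bar b]$ has order exactly $4$: in a class-$2$ group one has $[b^{-1}, a^2] \equiv [a, b]^{2} \pmod{\gamma_3(\G)}$ (via $[x^{-1}, y^n] = [x,y]^{-n}$), and by the preamble $[b^{-1}, a^2] \notin \gamma_3(\G)$, so $\gamma^2 \neq 1$ in $N$. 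With $\gamma$ central of order $4$, any relation $\bar a^i \bar b^j \gamma^k = 1$ yields $\gamma^{-j} = [\bar a^i \bar b^j \gamma^k, \bar a] = 1$ (hence $j \equiv 0 \pmod 4$) and symmetrically $i \equiv 0$; then $\gamma^k = 1$ forces $k \equiv 0$. Therefore $|N| = 4^3$, with $N^{ab} \cong C_4 \times C_4$ and $[N,N] = \langle \gamma \rangle \cong C_4$.

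For part (ii), I would combine the given equality $\psi\bigl(\gamma_3(\St_\G(1))\bigr) = \gamma_3(\G)^4$ with the injectivity of $\psi|_{\St_\G(1)}$ to obtain
\[
|\St_\G(1) : \gamma_3(\St_\G(1))| = |\psi(\St_\G(1)) : \gamma_3(\G)^4| =: |H|,
\]
where $H \leq N^4$ is generated by the images modulo $\gamma_3(\G)^4$ of $\psi(b), \psi(b^a), \psi(b^{a^2}), \psi(b^{a^3})$; with $\alpha = \bar a$ and $\beta = \bar b$, these are
\[
g_1 = (\alpha, 1, \alpha, \beta),\; g_2 = (\beta, \alpha, 1, \alpha),\; g_3 = (\alpha, \beta, \alpha, 1),\; g_4 = (1, \alpha, \beta, \alpha).
\]
Since $H \leq N^4$ has class $2$, we have $|H| = |H/[H,H]| \cdot |[H,H]|$. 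Projecting into $(N^{ab})^4 \cong C_4^8$, each $g_i$ has a $\beta$-component in exactly one (distinct) coordinate, so the four abelianised generators are $\mathbb{Z}/4\mathbb{Z}$-linearly independent, giving $|H/[H,H]| = 4^4$. On the other hand $[H, H] \subseteq [N, N]^4 \cong C_4^4$, and a direct evaluation of the six commutators $[g_i, g_j]$ shows that $[g_1, g_3] = [g_2, g_4] = 1$ while the remaining four span a free $\mathbb{Z}/4\mathbb{Z}$-submodule of rank $3$, giving $|[H, H]| = 4^3$. Multiplying yields $|H| = 4^7$.

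The most delicate step is the commutator computation in part (ii): one must evaluate all six $[g_i, g_j]$, identify each as a vector in $(\mathbb{Z}/4\mathbb{Z})^4$, and row-reduce over $\mathbb{Z}/4\mathbb{Z}$ rather than a field, verifying that the reduced generators form a $\mathbb{Z}/4\mathbb{Z}$-free submodule of rank $3$ rather than merely spanning $\mathbb{F}_2$-independent vectors. Everything else is a direct application of the class-$2$ structure together with the two explicit facts about $\gamma_3(\G)$ recorded immediately before the lemma.
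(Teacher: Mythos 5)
Your proof is correct, and at its core it follows the same skeleton as the paper's: both compute $|\St_{\G}(1):\gamma_3(\St_{\G}(1))|$ as $|\St_{\G}(1):\St_{\G}(1)'|\cdot|\St_{\G}(1)':\gamma_3(\St_{\G}(1))|=4^4\cdot 4^3$, since $\psi$ together with $\psi(\gamma_3(\St_{\G}(1)))=\gamma_3(\G)^4$ identifies your $H$ with $\St_{\G}(1)/\gamma_3(\St_{\G}(1))$, so $|H/[H,H]|$ and $|[H,H]|$ are exactly these two factors. Where you genuinely differ is in justification: the paper cites Pervova for $|\G:\gamma_3(\G)|=4^3$ and simply asserts that the two quotients are $C_4\times C_4\times C_4\times C_4$ and $C_4\times C_4\times C_4$ (with explicit generators), whereas you verify the numbers by working inside $(\G/\gamma_3(\G))^4$, a class-$2$ group you control completely after your self-contained normal-form argument for part (i), which uses only $b^4=1$ and the quoted fact $[b^{-1},a^2]\notin\gamma_3(\G)$. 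I checked your delicate step and it comes out as you claim: $[g_1,g_3]=[g_2,g_4]=1$, and the other four commutators correspond to the vectors $(1,0,0,-1)$, $(0,0,1,-1)$, $(-1,1,0,0)$, $(0,-1,1,0)$ in $(\mathbb{Z}/4\mathbb{Z})^4$, of which the second is the sum of the other three and those three are $\mathbb{Z}/4\mathbb{Z}$-independent, so $|[H,H]|=4^3$; this is consistent with the paper's generators $[b,b^a]$, $[b,b^a]^a$, $[b,b^{a^3}]$. Two small points to make explicit in a write-up: $H/[H,H]$ is a priori only a cover of the image of $H$ in $(N^{\mathrm{ab}})^4$, so pair your independence argument with the trivial upper bound $|H/[H,H]|\le 4^4$ (abelian, four generators of order dividing $4$); and record the standard fact $\St_{\G}(1)=\langle b,b^a,b^{a^2},b^{a^3}\rangle$, which both proofs rely on. Your route buys a verification of figures the paper leaves to citation and assertion, at the cost of a longer computation; the paper's version is shorter but less self-contained.
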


\begin{proof}
The first statement follows from~\cite[Lem.~1.2 and~2.1]{Pervova}. For the second statement, note that 
\begin{align*}
\St_{\G}(1)/\St_{\G}(1)'&= \langle b,b^a,b^{a^2},b^{a^3}\rangle \St_{\G}(1)'\\
&\cong C_4\times C_4\times C_4\times C_4,
\end{align*}
and 
\begin{align*}
 \St_{\G}(1)'/ \gamma_3(\St_{\G}(1))&=  \langle [b,b^a], [b,b^{a}]^{a},[b,b^{a^3}]\rangle \gamma_3(\St_{\G}(1))\\
 &\cong C_4\times C_4\times C_4.
\end{align*}
Thus 
$$
|\St_{\G}(1):\gamma_3(\St_{\G}(1))|= |\St_{\G}(1):\St_{\G}(1)'|\cdot|\St_{\G}(1)':\gamma_3(\St_{\G}(1)')| = 4^7.
$$
\end{proof}

The above result has the following application.

\begin{lem}\label{lem:not-in-G}
For $\mathcal{G}$ the second Grigorchuk group, we have $\psi^{-1}(([b^{-1},a^2],1,1,1))\not\in\mathcal{G}$.
\end{lem}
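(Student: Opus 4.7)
My plan is to argue by contradiction, constructing a homomorphism that separates the element $([b^{-1},a^2],1,1,1)$ from $\psi(\St_\G(1))$ modulo $K^4$, where $K:=\gamma_3(\G)$. Suppose some $g\in\G$ satisfies $\psi(g)=([b^{-1},a^2],1,1,1)$. Since each coordinate is a commutator, we have $\psi(g)\in\gamma_2(\G)^4$. Because $\gamma_2(\G)/K$ is abelian, the product-of-coordinates map $\sigma\colon(\gamma_2(\G)/K)^4\to\gamma_2(\G)/K$, $(x_1,x_2,x_3,x_4)\mapsto x_1x_2x_3x_4$, is a well-defined group homomorphism, and $\sigma(\psi(g)K^4)=[b^{-1},a^2]K$, which is nontrivial by the remark preceding \Cref{lem:first-properties}.

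It remains to show that $\sigma$ vanishes on every element of $\bigl(\psi(\St_\G(1))\cap\gamma_2(\G)^4\bigr)/K^4$. Using $\psi(\gamma_3(\St_\G(1)))=K^4$ together with \Cref{lem:first-properties}, the group $\psi(\St_\G(1))/K^4$ has order $4^7$ and is generated by the $\psi$-images of $b,b^a,b^{a^2},b^{a^3}$ and $[b,b^a],[b,b^a]^a,[b,b^{a^3}]$, as listed in the proof of that lemma. A direct computation shows that $\psi(b^{a^i})$ equals $(a,1,a,b),(b,a,1,a),(a,b,a,1),(1,a,b,a)$, whose $\bar b$-coordinates in $(\G/\gamma_2(\G))^4\cong(C_4\times C_4)^4$ are the four standard basis vectors of $C_4^4$; hence these four images are independent modulo $\gamma_2(\G)^4$, so the image of $\psi(\St_\G(1))/K^4$ in $(\G/\gamma_2(\G))^4$ has order $4^4$ and the kernel of this projection has order $4^3$.

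The subgroup $\psi(\St_\G(1)')/K^4$ maps into this kernel (since $\psi$ takes commutators to commutators, and $[\G^4,\G^4]=\gamma_2(\G)^4$), and by injectivity of $\psi$ on $\St_\G(1)$ it is isomorphic to $\St_\G(1)'/\gamma_3(\St_\G(1))\cong C_4^3$ of order $4^3$, so it equals the kernel. Explicit representatives are
\[
\psi([b,b^a])=([a,b],1,1,[a,b]^{-1}),\ \psi([b,b^a]^a)=([a,b]^{-1},[a,b],1,1),
\]
and $\psi([b,b^{a^3}])=(1,1,[a,b],[a,b]^{-1})$, on each of which $\sigma$ evaluates to the identity in $\gamma_2(\G)/K$ (the nontrivial coordinates $[a,b]$ and $[a,b]^{-1}$ cancel). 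Thus $\sigma(\psi(g)K^4)=K$, contradicting $\sigma(\psi(g)K^4)=[b^{-1},a^2]K\neq K$. The most delicate step is the counting argument that forces the kernel of $\psi(\St_\G(1))/K^4\to(\G/\gamma_2(\G))^4$ to coincide with $\psi(\St_\G(1)')/K^4$; once this identification is in hand, the rest is a direct evaluation of $\sigma$ on the three commutator generators.
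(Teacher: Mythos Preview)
Your proof is correct and follows essentially the same route as the paper's: both arguments work in $\psi(\St_\G(1))/K^4\cong\St_\G(1)/\gamma_3(\St_\G(1))$, identify the elements landing in $\gamma_2(\G)^4$ with $\psi(\St_\G(1)')/K^4$, and then check against the three explicit commutator generators $[b,b^a]$, $[b,b^a]^a$, $[b,b^{a^3}]$. Your version is in fact more careful than the paper's in two places: where the paper writes ``clearly $x$ would be in $\St_\G(1)'$'' you supply the counting argument (image of order $4^4$ in $(\G/\gamma_2(\G))^4$, hence kernel of order $4^3$, which must coincide with $\psi(\St_\G(1)')/K^4$), and where the paper says ``it is now clear that $x\notin\St_\G(1)'$'' you introduce the product-of-coordinates homomorphism $\sigma$ to make the separation explicit.
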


\begin{proof}
For simplicity, we write $x=\psi^{-1}(([b^{-1},a^2],1,1,1))$. If $x$ were in $\mathcal{G}$, then clearly $x$  would be in $\St_{\G}(1)'$.

Recall that $[b^{-1},a^2]\not\in \gamma_3(\G)$. Hence $x\not\in \gamma_3(\St_{\G}(1))$. Therefore it suffices to show that the image of $x$ modulo 
$\gamma_3(\St_{\G}(1))$ does not lie in the finite abelian group of exponent~$4$ below:
\begin{align*}
 \St_{\G}(1)'/\gamma_3(\St_{\G}(1))
 &=  \langle [b,b^a], [b,b^{a}]^{a},[b,b^{a^3}]\rangle \gamma_3(\St_{\G}(1))\\
 &\cong \langle ([a,b],1,1,[b,a]), ([b,a],[a,b],1,1), (1,1,[a,b],[b,a])  \rangle\psi(\gamma_3(\St_{\G}(1))).
\end{align*}
From the above presentation, it is now clear that $x\not\in \St_{\G}(1)'$, and hence $x\not\in \G$, as required.
\end{proof}

\subsection{Partial weights}
In this subsection, we introduce  partial weights in order to give a complete description of the elements in $\St_{\G}(2)$ and $\St_{\G}(3)$, which we will need in the sequel. The definitions and the notation follow~\cite{FAZR2}.

\smallskip

We write $b_0=b$, $b_1=b^a$, $b_2=b^{a^2}$, and $b_3=b^{a^3}$. Recall that $\St_{\G}(1)=\langle b_0,b_1,b_2,b_3\rangle$. Hence for $g\in \St_{\G}(1)$, we can write $g$ as a word in $b_0,b_1,b_2,b_3$, that is,
\[
g=\omega(b_0,b_1,b_2,b_3),
\]
where $\omega=\omega(x_0,x_1,x_2,x_3)$  is a group word in four variables $x_0,x_1,x_2,x_3$.

\smallskip

For $\omega$ a group word in the variables $x_0,x_1,x_2,x_3$, 
\begin{enumerate}
    \item [(i)] the \emph{partial weight of $\omega$ with respect to $x_i$}, for $0\le i\le 3$, is the sum of the exponents of $x_i$ in $\omega$, reduced modulo~$4$, and
    \item [(ii)] the \emph{total weight of $\omega$} is the sum of all its partial weights, reduced modulo~$4$.
\end{enumerate}

\smallskip

Let $g=\omega(b_0,b_1,b_2,b_3)$ be an element of $\St_{\G}(1)$. For $0\le i\le 3$, write $r_i$ for the partial weight of $\omega$ with respect to $x_i$. Then
\begin{align}\label{eq:description}
\begin{split}
    \psi(g)=(a^{r_0+r_2}\omega_1(b_0,b_1,b_2,b_3), \, a^{r_1+r_3}\omega_2(b_0,b_1,b_2,b_3)\,,\qquad\qquad\qquad\qquad\qquad\qquad\\
 \qquad\qquad\qquad\qquad\qquad\qquad\, a^{r_0+r_2}\omega_3(b_0,b_1,b_2,b_3)\,,\, a^{r_1+r_3}\omega_0(b_0,b_1,b_2,b_3)) 
\end{split}
\end{align}
where  $\omega_i$ is a word of total weight $r_i$, for each $0\le i\le 3$.

\begin{thm}\label{thm:unique-rep}
Let $\mathcal{G}$ be the second Grigorchuk group, and let $g\in \St_{\G}(1)$. Then the partial and total weights are the same for all representations of $g$ as a word in $b_0,b_1,b_2,b_3$.
\end{thm}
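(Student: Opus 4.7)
The plan is to realise each partial weight as a well-defined group homomorphism $\pi_i : \St_{\G}(1) \to \mathbb{Z}/4\mathbb{Z}$ for $0 \le i \le 3$, from which the well-definedness of the partial weights follows immediately; the total weight, being $\pi_0 + \pi_1 + \pi_2 + \pi_3$, is then also well-defined.

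To this end, I would first work in the free group $F = F(x_0, x_1, x_2, x_3)$. For each $i$, the map $\tilde{\pi}_i : F \to \mathbb{Z}/4\mathbb{Z}$ sending a word to the exponent sum of $x_i$ modulo $4$ is a homomorphism, obtained as the composition of the abelianisation $F \twoheadrightarrow F^{\mathrm{ab}} \cong \mathbb{Z}^4$ with the $i$th coordinate projection $\mathbb{Z}^4 \twoheadrightarrow \mathbb{Z}/4\mathbb{Z}$. I would then introduce the evaluation homomorphism $\varepsilon : F \twoheadrightarrow \St_{\G}(1)$ given by $\varepsilon(x_i) = b_i$, which is surjective because $\St_{\G}(1) = \langle b_0, b_1, b_2, b_3 \rangle$. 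The content of the theorem is precisely that $\ker \varepsilon \subseteq \ker \tilde{\pi}_i$ for each $i$, so that $\tilde{\pi}_i$ descends to the desired $\pi_i$ on $\St_{\G}(1)$.

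This is where Lemma~\ref{lem:first-properties} does the work: it establishes that
\[
\St_{\G}(1)/\St_{\G}(1)' \;\cong\; C_4 \times C_4 \times C_4 \times C_4,
\]
with the four cyclic factors generated by the images of $b_0, b_1, b_2, b_3$ respectively. Consequently, the induced surjection $F^{\mathrm{ab}} \cong \mathbb{Z}^4 \twoheadrightarrow \St_{\G}(1)/\St_{\G}(1)'$ has kernel exactly $4\mathbb{Z}^4 = \bigcap_{i=0}^{3} \ker \tilde{\pi}_i$ (viewed as maps on $F^{\mathrm{ab}}$), which forces $\ker \varepsilon \subseteq \ker \tilde{\pi}_i$ as required. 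Since Lemma~\ref{lem:first-properties} has already supplied the structure of the abelianisation, I do not expect any serious obstacle; the theorem is essentially a reformulation of that structural fact in the language of word exponents, with the only care needed being to verify that passing to the abelianisation loses no information about the partial weights modulo~$4$.
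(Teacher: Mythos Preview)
Your argument is correct, but it is genuinely different from the paper's. The paper does not invoke Lemma~\ref{lem:first-properties} at all; instead it argues directly from the self-similar description~\eqref{eq:description}: if $\omega(b_0,\ldots,b_3)=1$, then the rooted parts of the four components force $r_0+r_2=r_1+r_3=0$, so the total weight vanishes; then, since each component word $\omega_i$ also evaluates to~$1$ and has total weight~$r_i$, the same observation applied one level down yields $r_i=0$. Your route is shorter and more structural---it recognises that the theorem is exactly the statement that $b_0,\ldots,b_3$ are independent in $\St_{\G}(1)^{\mathrm{ab}}\cong C_4^4$---but it leans on a claim that appears only inside the \emph{proof} of Lemma~\ref{lem:first-properties} (not its statement) and is asserted there without justification. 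The paper's recursive argument, by contrast, is self-contained and in effect \emph{supplies} a proof of that abelianisation structure; so while your reduction is valid, you should be aware that you are trading one unproved step for another unless you independently verify $\St_{\G}(1)/\St_{\G}(1)'\cong C_4^4$ (which can be done, for instance, by tracking the $b$-exponents in $\psi(b_i)$ at the four coordinates).
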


\begin{proof}
We proceed as in~\cite[Proof of Thm.~2.8]{FAZR2}. First we note that it suffices to prove that, if $\omega$ is a word such that $\omega(b_0,b_1,b_2,b_3)=1$, then the total weight of $\omega$ is zero, and all partial weights $r_0,r_1,r_2,r_3$ of $\omega$ are also zero.

We observe from the expression for $\psi(g)$ above that 
\[
r_0+r_2=r_1+r_3=0,
\]
which proves that the total weight of $\omega$ is zero.

Additionally, since $\omega(b_0,b_1,b_2,b_3)=1$, in~\eqref{eq:description} we must have $\omega_i(b_0,b_1,b_2,b_3)=1$ for all $0\le i\le 3$. Since the total weight of such a $\omega_i$, which is $r_i$, has just been proved to be zero, the result follows.
\end{proof}

Let $g\in \St_{\G}(1)$. The \emph{partial weight of~$g$ with respect to $b_i$}, for $0\le i\le 3$, and the \emph{total weight of~$g$}, as the corresponding weights for any word representing~$g$. For $g$ with partial weights $r_i$ with respect to $b_i$ for $0\le i\le 3$, we further refer to $(r_0,r_1,r_2,r_3)\in (\mathbb{Z}/4\mathbb{Z})^4$ as the \emph{weight vector} of~$g$. 

The following is key.

\begin{thm}\label{thm:key}
Let $\mathcal{G}$ be the second Grigorchuk group and suppose $g\in \St_{\G}(1)$ has weight vector $(r_0,r_1,r_2,r_3)$. Then
\begin{enumerate}
    \item [(i)] we have $g\in\St_{\G}(2)$ if and only if $r_0+r_2 = r_1+r_3=0$;
    \item [(ii)] if $g\in \St_{\G}(3)$ then $r_0=r_1=r_2=r_3=0$.
\end{enumerate}
\end{thm}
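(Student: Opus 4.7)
The plan is to use the decomposition formula~\eqref{eq:description} together with \Cref{thm:unique-rep}, and to bootstrap from part~(i) to part~(ii).

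For~(i), I would argue that $g\in\St_{\G}(2)$ if and only if each of the four coordinates of $\psi(g)$ lies in $\St_{\G}(1)$, since $\St_{\G}(2)=\psi^{-1}(\St_{\G}(1)\times\St_{\G}(1)\times\St_{\G}(1)\times\St_{\G}(1))$. Writing $\psi(g)$ as in~\eqref{eq:description}, the factors $\omega_i(b_0,b_1,b_2,b_3)$ are already in $\St_{\G}(1)$ because each $b_j$ fixes the first level. Hence each coordinate is in $\St_{\G}(1)$ precisely when the rooted parts $a^{r_0+r_2}$ and $a^{r_1+r_3}$ are trivial, i.e.\ when $r_0+r_2\equiv 0$ and $r_1+r_3\equiv 0 \pmod 4$. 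The congruences modulo~$4$ are exactly the equalities in $\mathbb{Z}/4\mathbb{Z}$, yielding the stated equivalence.

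For~(ii), I would first observe that $\St_{\G}(3)\subseteq\St_{\G}(2)$, so by~(i) already $r_0+r_2=r_1+r_3=0$, and then~\eqref{eq:description} simplifies to
\[
\psi(g)=(\omega_1(b_0,b_1,b_2,b_3),\,\omega_2(b_0,b_1,b_2,b_3),\,\omega_3(b_0,b_1,b_2,b_3),\,\omega_0(b_0,b_1,b_2,b_3)).
\]
Because $g\in\St_{\G}(3)$, every such coordinate belongs to $\St_{\G}(2)$. Applying~(i) to each $\omega_j$, its partial weights $(s_0,s_1,s_2,s_3)$ satisfy $s_0+s_2=s_1+s_3=0$, so its total weight $s_0+s_1+s_2+s_3$ is also $0$. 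But by construction the total weight of $\omega_j$ equals~$r_j$, so each $r_j=0$.

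The only subtlety worth flagging is the well-definedness of the partial and total weights: without \Cref{thm:unique-rep}, the numbers $r_i$ would a priori depend on the chosen word representing $g$, and the statement of the theorem would be meaningless. Once that is in hand, the above argument is essentially bookkeeping on the formula~\eqref{eq:description}, and I expect no serious obstacle; part~(ii) is just~(i) applied one level deeper together with the identification ``sum of partial weights equals total weight.''
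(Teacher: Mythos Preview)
Your proposal is correct and follows essentially the same approach as the paper: for (i) you read off the condition directly from the decomposition~\eqref{eq:description}, and for (ii) you apply (i) to each coordinate $\omega_j\in\St_{\G}(2)$ and use that its total weight equals~$r_j$. Your write-up is simply a more detailed version of the paper's brief argument, with the well-definedness caveat (via \Cref{thm:unique-rep}) made explicit.
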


\begin{proof}
(i) This is clear from (\ref{eq:description}).
%\begin{align*}
%\psi(g)=(a^{r_0+r_2}w_1(b_0,b_1,b_2,b_3), \, a^{r_1+r_3}w_2&(b_0,b_1,b_2,b_3)\,,\\
%\, a^{r_0+r_2}&w_3(b_0,b_1,b_2,b_3)\,,\, a^{r_1+r_3}w_0(b_0,b_1,b_2,b_3))
%\end{align*}

(ii) Suppose that $g\in \St_G(3)$. From (\ref{eq:description}), it follows that $\omega_i (b_0,b_1,b_2,b_3)\in\St_{\G}(2)$ for all $0\le i\le 3$. Since the total weight for $\omega_i (b_0,b_1,b_2,b_3)$ is $r_i$, the result now follows from part~(i).
\end{proof}

\begin{thm}\label{thm:St-2}
Let $\mathcal{G}$ be the second Grigorchuk group. Then
\[
|\St_{\G}(2):\St_{\G}(3)|=2^{11}.
\]
\end{thm}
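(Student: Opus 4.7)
The restriction of $\psi$ induces an injection
\[
\bar\psi\colon \St_{\G}(2)/\St_{\G}(3) \hookrightarrow (\St_{\G}(1)/\St_{\G}(2))^{4},
\]
and by Theorem~\ref{thm:key}(i) each factor on the right is identified with $C_{4}\times C_{4}$ via the weight-vector map $g\mapsto(r_{0}+r_{2},\,r_{1}+r_{3})$, so the ambient group is $C_{4}^{8}$. The plan is to compute $|\operatorname{Im}(\bar\psi)|$ exactly by evaluating $\bar\psi$ on an explicit generating set and then incorporating the contribution coming from the branch relation $\psi(\gamma_{3}(\St_{\G}(1)))=\gamma_{3}(\G)^{4}$.

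First, I would fix an abelian generating set of size five for $\St_{\G}(2)/\gamma_{3}(\St_{\G}(1))$: combining Lemma~\ref{lem:first-properties} with the isomorphism $\St_{\G}(1)/\St_{\G}(2)\cong C_{4}^{2}$ gives $|\St_{\G}(2):\gamma_{3}(\St_{\G}(1))|=4^{5}$, and the five elements
\[
b_{0}b_{2}^{-1},\ b_{1}b_{3}^{-1},\ [b_{0},b_{1}],\ [b_{1},b_{2}],\ [b_{0},b_{3}]
\]
(the first two lifting a basis of $\St_{\G}(2)/\St_{\G}(1)'$, the latter three the basis of $\St_{\G}(1)'/\gamma_{3}(\St_{\G}(1))$ displayed in the proof of Lemma~\ref{lem:first-properties}) do the job. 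Using $\psi(b)=(a,1,a,b)$ and the $a$-conjugation rule, a direct calculation of $\bar\psi$ on each generator, followed by passage to the quotient $\bigl(C_{4}\times C_{4}/\langle(2,2)\rangle\bigr)^{4}\cong (C_{4}\oplus C_{2})^{4}$ in each slot, shows that the first two generators contribute purely in the $C_{4}$-direction (spanning a rank-$2$ subgroup of order $16$), while the three commutator generators contribute purely in the $C_{2}$-direction (their nonzero components are $[a,b]\mapsto(1,3)$ and $[b,a]\mapsto(3,1)$, which have class $(0,1)$ in $C_{4}\oplus C_{2}$), spanning an $\mathbb{F}_{2}$-subspace of dimension $3$ and order $8$.

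Finally, the branch identity $\psi(\gamma_{3}(\St_{\G}(1)))=\gamma_{3}(\G)^{4}$ combined with the identification $\gamma_{3}(\G)\St_{\G}(2)/\St_{\G}(2)=\langle(2,2)\rangle\cong C_{2}$ (the nontrivial containment coming from the weight-vector image of $[[a,b],a]$, and the reverse from the $a$-invariance of $\gamma_{3}(\G)$ modulo $\St_{\G}(1)'$) shows that $\bar\psi(\gamma_{3}(\St_{\G}(1)))$ is the "diagonal" $C_{2}^{4}$-subgroup of $C_{4}^{8}$ sitting inside $\langle(2,2)\rangle^{4}$, of order $16$. Assembling the three independent contributions yields
\[
|\bar\psi(\St_{\G}(2)/\St_{\G}(3))|=16\cdot 8\cdot 16=2^{11},
\]
as required. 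The main obstacle is the rank bookkeeping in $C_{4}^{8}$: one must verify that the five image vectors together with the diagonal $C_{2}^{4}$ produce exactly $2^{11}$ elements, neither more nor fewer. This reduces to a handful of explicit linear-algebra checks, the key relations being $2[b_{0},b_{1}]\equiv$ (a diagonal element) modulo $\St_{\G}(2)^{4}$ and its cyclic counterparts, which pin down the intersection of the $C_{4}$- and $C_{2}$-contributions with the diagonal $C_{2}^{4}$.
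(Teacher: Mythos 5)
Your proposal is correct and reaches $2^{11}$ by a genuinely different route from the paper. The paper partitions $\St_{\G}(2)/\St_{\G}(3)$ into the $16$ weight classes allowed by Theorem~\ref{thm:key}(i) and shows each class has size $2^7$, which requires identifying $R^{(1)}=\St_{\G}(1)'$, importing index computations from the GGS-paper of Fern\'andez-Alcober and Zugadi-Reizabal, and proving Lemma~\ref{lem:not-in-G} to pin down the final index $|\St_{\G}(2)^{\times 4}:\psi(\St_{\G}(3))|=2$. You instead compute the image of the injection $\St_{\G}(2)/\St_{\G}(3)\hookrightarrow(\St_{\G}(1)/\St_{\G}(2))^4\cong C_4^{\,8}$ directly, evaluating it on the five generators of $\St_{\G}(2)$ modulo $\gamma_3(\St_{\G}(1))$ and adding the image $\langle(2,2)\rangle^4$ of $\gamma_3(\St_{\G}(1))$ coming from the branch relation; the resulting abelian bookkeeping ($16\cdot 8\cdot 16$) is sound, and I have checked the five image vectors and the rank computations, which do give order exactly $2^{11}$. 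What your approach buys is self-containedness: it needs only Theorem~\ref{thm:key}, the generation statements inside Lemma~\ref{lem:first-properties}, and the branch identity, and it bypasses both Lemma~\ref{lem:not-in-G} and the transported results from the GGS literature; the paper's coset-partition argument, in exchange, yields the extra structural information that all $16$ weight classes have equal size. One justification in your sketch is too thin as stated: for the equality $\gamma_3(\G)\St_{\G}(2)/\St_{\G}(2)=\langle(2,2)\rangle$, the containment $\supseteq$ is fine via $[[a,b],a]$, but ``$a$-invariance modulo $\St_{\G}(1)'$'' alone does not force the containment $\subseteq$, since $C_4\times C_4$ has larger swap-invariant subgroups (e.g.\ $\langle(1,3)\rangle$), and the final count genuinely depends on the image being of order $2$ and not $4$. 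The repair is easy: $\gamma_3(\G)=[\G',\G]$ is generated by commutators $[x,a^i]$ and $[x,h]$ with $x\in\G'$ and $h\in\St_{\G}(1)$; the latter lie in $\St_{\G}(1)'$ and die in $\St_{\G}(1)/\St_{\G}(2)$, while the former, since every element of $\G'$ has total weight zero and hence weight image of the form $(u,-u)$, have weight image $(2u,2u)\in\langle(2,2)\rangle$. With that inserted, your argument is complete.
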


\begin{proof}
Let $g\in\St_{\G}(1)$ have weight vector $(r_0,r_1,r_2,r_3)$. From Theorem~\ref{thm:key}(i), it follows that there are exactly $4^2$ possibilities for $(r_0,r_1,r_2,r_3)$, for $g$ to be in $\St_{\G}(2)$. Using the notation of~\cite[Proof of Thm.~3.5]{FAZR2}, we denote the possibilities by
\[
r^{(i)}=(r_0^{(i)},r_1^{(i)},r_2^{(i)},r_3^{(i)})
\]
for each $1\le i\le 16$. For convenience, we write $r^{(1)}=(0,0,0,0)$.

We proceed as in~\cite[Proof of Thm.~3.5]{FAZR2}. First note that each solution $r^{(i)}$ determines a subset~$R^{(i)}$ of $\St_{\G}(2)$, consisting of all the elements whose weight vector is $r^{(i)}$. For the natural map $\pi:\mathcal{G}\rightarrow \mathcal{G}/\St_{\G}(3)$, we set $S^{(i)}=\pi(R^{(i)})$. From the previous paragraph, we have 
\[
\St_{\G}(2)/\St_{\G}(3)=\bigcup_{i=1}^{16} S^{(i)}.
\]
The theorem follows once we establish the following:
\begin{enumerate}
    \item [(a)] For $i\ne j\in \{1,\ldots,16\}$, the sets $S^{(i)}$ and $S^{(j)}$ are disjoint.
    \item [(b)] We have $|S^{(i)}|=2^7$ for all $i\in\{1,\ldots, 16\}$.
\end{enumerate}

To prove (a), we consider two elements $g\in R^{(i)}$ and $h\in R^{(j)}$ with the same image under~$\pi$. Then $gh^{-1}\in\St_{\G}(3)$ and so the weight vector of $gh^{-1}$ is the zero vector, by Theorem~\ref{thm:key}(ii). Since the weight vector defines a homomorphism from $\St_{\G}(1)$ to $(\mathbb{Z}/4\mathbb{Z})^4$, we deduce that $S^{(i)}=S^{(j)}$, and thus $i=j$, as required.

For part (b), we begin by observing that  $S^{(i)}$ is non-empty for each $1\le i \le 16$. Also, if $h_i$ is in $S^{(i)}$ then $S^{(i)}=h_iS^{(1)}$. Therefore $|S^{(i)}|=|S^{(1)}|$, and hence it suffices to prove that $|S^{(1)}|=2^7$.  

Let $g\in\St_{\G}(2)$. Then  $g\in R^{(1)}$ if and only if every component of $\psi(g)$ has zero total weight. Since $\G'$ consists of all the elements of $\St_{\G}(1)$ whose total weight is equal to $0$, it follows that $g\in R^{(1)}$ if and only if 
\[
\psi(g)\in \G'\times \G'\times \G'\times \G',
\]
and thus
\[
R^{(1)}=\G\cap \psi^{-1}(\G'\times \G'\times \G'\times \G').
\]
As in~\cite[Proof of Thm.~3.7]{FAZR2} (note that~\cite[Thm.~2.14]{FAZR2} holds with $p$ replaced by $4$), we deduce that
\[
|\G'\times \G'\times \G'\times \G':\psi(R^{(1)})|=4 \quad\text{ and }\quad R^{(1)}=\St_{\G}(1)'.
\]

Now, as in~\cite[Thm.~2.4(i)]{FAZR2}, we have $|\St_{\G}(1):\St_{\G}(2)|=4^2$. Thus
\[
|\G'\times \G'\times \G'\times \G':\St_{\G}(2)\times \St_{\G}(2)\times\St_{\G}(2)\times \St_{\G}(2) |=4^4,
\]
and it follows that $|S^{(1)}|=2^7$ once we establish that 
\[
|\St_{\G}(2)\times \St_{\G}(2)\times\St_{\G}(2)\times \St_{\G}(2):\psi(\St_{\G}(3)) |=2.
\]
As $\St_{\G}(3)=\St_{\G}(1)\cap \psi^{-1}(\St_{\G}(2)\times \St_{\G}(2)\times\St_{\G}(2)\times \St_{\G}(2))$, it suffices to show that
\[
|\psi^{-1}(\St_{\G}(2)\times \St_{\G}(2)\times\St_{\G}(2)\times \St_{\G}(2))\cdot \St_{\G}(1) : \St_{\G}(1)|=2.
\]
By Theorem~\ref{thm:key}, we have $\St_{\G}(2)=\langle b_0b_2^{-1}, b_1b_3^{-1}\rangle^G=\langle b_0b_2^{-1}\rangle^G=\langle [b^{-1},a^2]\rangle^G$. Since $[b^{-1},a^2]^2\in \gamma_{\G}(3)\le \St_{\G}(1)$, by Lemma~\ref{lem:not-in-G} it is enough to prove the following equality:
\[
\frac{\psi^{-1}(\St_{\G}(2)\times \St_{\G}(2)\times\St_{\G}(2)\times \St_{\G}(2))\cdot \St_{\G}(1) }{ \St_{\G}(1)}=\frac{\langle\psi^{-1}(([b^{-1},a^2],1,1,1))\rangle\St_{\G}(1)}{\St_{\G}(1)}
\]
Note that the expression above is clear from the fact that
\begin{align*}
\psi^{-1}((1,[b^{-1},a^2],1,1))&=\psi^{-1}(([b^{-1},a^2],1,1,1))\cdot [\psi^{-1}((b_0b_2^{-1},1,1,1)),a]\\
&=\psi^{-1}(([b^{-1},a^2],1,1,1))\cdot
\psi^{-1}((b_2b_0^{-1},b_0b_2^{-1},1,1))
\end{align*}
and 
\[
(b_2b_0^{-1},b_0b_2^{-1},1,1)=([a^2,b^{-1}],[b^{-1},a^2],1,1)\in \psi(\gamma_3(\G))\le \psi(\St_{\G}(1)).
\]
Hence we are done.
\end{proof}

\subsection{Further properties}\label{sec:furtherproperties}
We end this section with a few results of independent interest.

\begin{lem}
The second Grigorchuk group~$\mathcal{G}$ is  super strongly fractal.
\end{lem}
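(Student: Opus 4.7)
My plan is to establish the stronger per-level claim
\[
\varphi_x\bigl(\St_{\mathcal{G}}(n)\bigr) = \St_{\mathcal{G}}(n-1) \qquad \text{for every } x \in X \text{ and every } n \geq 1,
\]
from which super strong fractality follows by induction on $n$ via the chain rule $\varphi_{x\omega'} = \varphi_{\omega'} \circ \varphi_x$ (valid because $\St_{\mathcal{G}}(n) \leq \St_{\mathcal{G}}(u_x)$ for $n \geq 1$). The containment $\varphi_x(\St_{\mathcal{G}}(n)) \subseteq \St_{\mathcal{G}}(n-1)$ is automatic by self-similarity, and the base case $n=1$ is strong fractality, checked directly from $\St_{\mathcal{G}}(1) = \langle b_0, b_1, b_2, b_3 \rangle$ and the cyclic-shift pattern of the tuples $\psi(b_i) \in \{(a,1,a,b), (b,a,1,a), (a,b,a,1), (1,a,b,a)\}$: both generators $a$ and $b$ of $\mathcal{G}$ appear as some section $\varphi_x(b_j)$ at every $x \in X$.

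The key tool for the inductive step is the regular-branch identity $\psi(\gamma_3(\St_{\mathcal{G}}(1))) = \gamma_3(\mathcal{G})^4$: for any $g \in \gamma_3(\mathcal{G}) \cap \St_{\mathcal{G}}(n-1)$ and any $x$, it supplies an element $h \in \gamma_3(\St_{\mathcal{G}}(1)) \cap \St_{\mathcal{G}}(n)$ with $\psi(h)$ placing $g$ in the $x$-th coordinate and $1$'s elsewhere, so $\varphi_x(h) = g$. Thus $\varphi_x(\St_{\mathcal{G}}(n)) \supseteq \St_{\mathcal{G}}(n-1) \cap \gamma_3(\mathcal{G})$, and since $\St_{\mathcal{G}}(n-1) \leq \St_{\mathcal{G}}(3) \leq \gamma_3(\mathcal{G})$ as soon as $n \geq 4$, this already disposes of every level $n \geq 4$.

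The cases $n = 2$ and $n = 3$ require extra witnesses to fill in the quotient $\St_{\mathcal{G}}(n-1)/(\St_{\mathcal{G}}(n-1) \cap \gamma_3(\mathcal{G}))$. For $n=2$: since $\mathcal{G}/\gamma_3(\mathcal{G})$ is nilpotent of class $2$, the quotient $\St_{\mathcal{G}}(1)/\gamma_3(\mathcal{G})$ is abelian of order $16$, and as $b_i \equiv b[b,a]^i \pmod{\gamma_3(\mathcal{G})}$, it is generated modulo $\gamma_3(\mathcal{G})$ by $b$ and $[b,a]$. I would use the witnesses $b_1 b_3^{-1} \in \St_{\mathcal{G}}(2)$ with $\psi$-image $(b,1,b^{-1},1)$ and the commutator $[b_0, b_1] \in \St_{\mathcal{G}}(2)$ with $\psi$-image $([a,b],1,1,[b,a])$ to realise $b$ and $[a,b]$ in $\varphi_1(\St_{\mathcal{G}}(2))$; conjugation by powers of $a$ transfers these witnesses to every other position. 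For $n=3$: a weight-vector computation via Theorem~\ref{thm:key} shows $\St_{\mathcal{G}}(2)/(\St_{\mathcal{G}}(2) \cap \gamma_3(\mathcal{G}))$ has order only $2$, represented by $[b^{-1}, a^2]$. The witness I would use is $w := [\psi^{-1}((b_0 b_2^{-1}, 1, 1, 1)), a]$, whose $\psi$-image, computed inside $\Aut(T)$, equals $([a^2, b^{-1}], [b^{-1}, a^2], 1, 1)$; by the identity exhibited at the end of the proof of Theorem~\ref{thm:St-2} this tuple lies in $\psi(\gamma_3(\mathcal{G}))$, so $w \in \gamma_3(\mathcal{G}) \cap \St_{\mathcal{G}}(3)$ with $\varphi_1(w) = [a^2, b^{-1}]$ representing the non-trivial coset. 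Conjugation by powers of $a$ then handles the remaining positions.

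The main obstacle is precisely the $n=3$ witness: by Lemma~\ref{lem:not-in-G} the naïve candidate $\psi^{-1}(([b^{-1},a^2],1,1,1))$ is not in $\mathcal{G}$, so one cannot localise the non-trivial coset representative to a single coordinate. The commutator construction above forces a second non-trivial coordinate but, thanks to the explicit computation already carried out in the proof of Theorem~\ref{thm:St-2}, keeps the resulting element inside $\gamma_3(\mathcal{G}) \leq \mathcal{G}$.
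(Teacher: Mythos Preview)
Your proof is correct but follows a genuinely different route from the paper. You prove the strictly stronger per-level statement $\varphi_x(\St_{\mathcal{G}}(n))=\St_{\mathcal{G}}(n-1)$ for all $n\ge 1$, handling $n\ge 4$ uniformly via the exact branch identity $\psi(\gamma_3(\St_{\mathcal{G}}(1)))=\gamma_3(\mathcal{G})^4$ together with $\St_{\mathcal{G}}(3)\le\gamma_3(\mathcal{G})$, and then patching the residual cosets at $n=2,3$ with explicit witnesses (for $n=3$ recycling the element $\psi^{-1}((b_2b_0^{-1},b_0b_2^{-1},1,1))\in\gamma_3(\mathcal{G})$ from the end of the proof of Theorem~\ref{thm:St-2}). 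The paper, by contrast, never asserts the per-level equality; it observes that a single-step descent through $\gamma_3(\mathcal{G})$ alone fails, since $\varphi_x(\gamma_3(\mathcal{G}))\subseteq\langle a^2,b\rangle\mathcal{G}'\ne\mathcal{G}$, and therefore switches to a two-step argument: it first shows $\varphi_{ux}(\St_{\mathcal{G}}(2))=\mathcal{G}$ directly, then uses the element $[b,a,a^2]\in\gamma_3(\mathcal{G})\cap\St_{\mathcal{G}}(2)$ to obtain $\varphi_{ux}(\gamma_3(\mathcal{G}))=\mathcal{G}$, and propagates this via the branch structure over $N=\langle[b,a,a^2]\rangle^{\mathcal{G}}$. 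Your approach yields extra information (the exact image of each $\St_{\mathcal{G}}(n)$ under a single section map) at the cost of the delicate $n=3$ witness, which you rightly note cannot be localised to one coordinate because of Lemma~\ref{lem:not-in-G}; the paper's two-level descent sidesteps that obstruction entirely but does not give the sharper statement.
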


\begin{proof}
Observe that $\G$ is level-transitive. Also, the group $\G$ is fractal since $\psi(b)=(a,1,a, b)$ and $\psi(b^a) = (b,a,1,a)$. From ~\cite[Lem.~2.5]{Jone},  the group~$\G$ is also strongly fractal. 

We first show that $\varphi_{ux}(\St_{\G}(2))= \G$ for $u,x\in X$.
As seen in the proof of Theorem~\ref{thm:St-2}, we have $|\G:\St_{\G}(2)|=4^3$. Observe that $\St_{\G}(1)'\le \St_{\G}(2)$, hence it follows from Lemma~\ref{lem:first-properties}, and the remark above it, that 
\[
\frac{\St_{\G}(2)}{\St_{\G}(1)'}=\frac{\langle b(b^{a^2})^{-1}, b^a(b^{a^3})^{-1}\rangle\St_{\G}(1)'}{\St_{\G}(1)'}.
\]
We deduce that, for $u\in X$,
\[
\varphi_u(\St_{\G}(2))=\langle b\rangle G'.
\]
As $\psi(b)=(a,1,a,b)$ and $b^a=b[b,a]$, it follows that $\varphi_{ux}(\St_{\G}(2))=\G$, as required. 

Now let $n\in\mathbb{N}$ and write $K_n = \psi_n^{-1}(\gamma_3(\G) \times \overset{4^n}\ldots \times \gamma_3(\G))$. Since $\G$ is regular branch over $\gamma_3(\G)$, we have
$$
\psi_n(K_n) = \gamma_3(\G) \times \overset{4^n}{\dots} \times \gamma_3(\G) \subseteq (\St_{\G}(1) \times \overset{4^n}{\dots} \times \St_{\G}(1)) \cap \psi_n(\St_{\G}(n)) = \psi_n(\St_{\G}(n+1)).
$$
From the equation
\[
    \psi([b,a,a])=(b^{-1}ab^{-1}a,a^{-2}b,a^2,a^{-1}ba^{-1}),
 \]
 we deduce that $\varphi_x(\gamma_3(\G))\subseteq\langle a^2,b\rangle\G'\ne \G$, for all $x\in X$.  However from
 \[
 \psi([b,a,a^2])=(b^{-1},a^{-1}ba,b,a^{-1}ba)\in\psi(\St_{\G}(2)),
 \]
 we obtain that $\varphi_{ux}(\gamma_3(\G))=\G$ for all $u,x\in X$. 
 
 Write $N=\langle [b,a,a^2]\rangle^{\G}$, and $M_n=\psi_n^{-1}(N\times\overset{4^n}\ldots\times N)$. Then
 \[
 \psi_n(M_n)\subseteq (\St_{\G}(2) \times \overset{4^n}{\dots} \times \St_{\G}(2)) \cap \psi_n(\St_{\G}(n)) = \psi_n(\St_{\G}(n+2)).
 \]
 
 Hence for any $\omega\in X^*$ of length $n$, we have $N = \varphi_{\omega}(M_n) \subseteq \varphi_{\omega}(\St_{\G}(n+2))$. 
It remains to show that if we take $\omega\in X^*$ of length $n+2$, we have $\varphi_{\omega}(\St_{\G}(n+2)) = \G$, where $n \in\mathbb{N}$. 
But for any $u,x \in X$, and  any $\nu\in X^*$ of length $n$, we have $\G =\varphi_{ux}(N) \subseteq \varphi_{\nu ux}(\St_{\G}(n+2))$. From this it follows that for any $\omega\in X^*$ of length $n+2$ we have $\varphi_{\omega}(\St_{\G}(n+2))=\G$, as required.
\end{proof}

In what follows, we prove that the second Grigorchuk group is saturated. We recall that a group $G\le \Aut(T)$ is said to be \emph{saturated} if for any  $n\in\mathbb{N}$ there exists a subgroup $H_n \leq \St_G(n)$ that is characteristic in~$G$ and level-transitive on every $n$th level subtree. Examples of saturated groups acting on rooted trees are, among others, the first Grigorchuk group~\cite{LN}, the $p$-Basilica groups~\cite{DNT}, and the branch multi-EGS groups~\cite{TUA}.

\begin{thm}
The second Grigorchuk group $\G$ is saturated.
\end{thm}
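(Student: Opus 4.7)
The plan is to take $H_n = \St_{\mathcal{G}}(n)$ itself and verify that this subgroup already meets both requirements in the definition of saturated. The argument then reduces to two essentially independent checks, each of which invokes one of the results proved immediately before.

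First, I would argue that $\St_{\mathcal{G}}(n)$ is characteristic in $\mathcal{G}$. The crucial input is Proposition~\ref{prop:main}(ii), which identifies $\Aut(\mathcal{G})$ with $N_{\Aut(T)}(\mathcal{G})$; hence every automorphism of $\mathcal{G}$ is implemented by conjugation by some $\sigma \in \Aut(T)$. Since $\sigma$ permutes the level-$n$ vertices among themselves, conjugation by $\sigma$ preserves $\St_{\Aut(T)}(n)$, and intersecting with~$\mathcal{G}$ yields invariance of $\St_{\mathcal{G}}(n) = \mathcal{G} \cap \St_{\Aut(T)}(n)$ under every automorphism of~$\mathcal{G}$.

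Second, I would show that $\St_{\mathcal{G}}(n)$ acts level-transitively on each subtree $T_v$ rooted at a level-$n$ vertex~$v$. Here I invoke the super strong fractality of $\mathcal{G}$ established in the preceding lemma: $\varphi_v(\St_{\mathcal{G}}(n)) = \mathcal{G}$ for every such~$v$. Identifying $T_v$ with~$T$, this means that the action of $\St_{\mathcal{G}}(n)$ on $T_v$ realises the full action of $\mathcal{G}$ on~$T$; since $\mathcal{G}$ is level-transitive on~$T$, level-transitivity on $T_v$ follows at once.

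No step in this plan is technically difficult; the main point is that the heavy lifting has already been done in the two preceding results. The only genuine subtlety is that without Proposition~\ref{prop:main}(ii) one would merely conclude that $\St_{\mathcal{G}}(n)$ is \emph{normal} in~$\mathcal{G}$, not characteristic, so that proposition is precisely what converts the routine normality argument into the stronger statement needed here.
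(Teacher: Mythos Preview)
Your argument is circular. In the paper, Proposition~\ref{prop:main}(ii) is not proved independently of saturation: it is obtained \emph{after} the saturation theorem, as a corollary via the Lavreniuk--Nekrashevych rigidity result~\cite{LN}. So invoking Proposition~\ref{prop:main}(ii) to deduce that $\St_{\mathcal{G}}(n)$ is characteristic, and hence that $\mathcal{G}$ is saturated, assumes precisely what you are trying to establish. You already identified this as the one genuine subtlety in your last paragraph; the point is that the paper gives you no route to Proposition~\ref{prop:main}(ii) that bypasses saturation, so without it you are left with mere normality of $\St_{\mathcal{G}}(n)$.

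The paper sidesteps the issue by choosing subgroups that are characteristic for purely group-theoretic reasons, with no reference to $\Aut(\mathcal{G})$: set $H_0=\mathcal{G}$ and $H_{n+1}=H_n'$, so each $H_n$ is an iterated derived subgroup and hence automatically characteristic. A direct computation with $[a,b]$ and $[a,b]^{a^2}$ shows $\varphi_x(H_1)=\mathcal{G}$ for every $x\in X$; an easy induction (using that $\varphi_\omega(H_n')=\varphi_\omega(H_n)'$ when $\varphi_\omega$ is surjective on~$H_n$) then gives $H_n\le\St_{\mathcal{G}}(n)$ and $\varphi_\omega(H_n)=\mathcal{G}$ for every length-$n$ word~$\omega$, which yields the required level-transitivity. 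Your appeal to super strong fractality for the transitivity step is legitimate --- that lemma is proved beforehand with no dependence on saturation --- but it cannot repair the characteristic step.
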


\begin{proof}
We want to prove that,  for any $n\in\mathbb{N}$, there exists a subgroup $H_n \leq \St_{\G}(n)$ characteristic in~$\G$ and level-transitive on every subtree of the $n$th level. To this end, define the subgroups $H_n \leq \St_{\G}(n)$ inductively as follows:
$H_0=\G$, and $H_{n+1}=H_n'$. Then $H_1$ contains $\psi([a,b])=(b^{-1}a, a^{-1}, a, a^{-1}b)$ and $\psi([a,b]^{a^2})=(a,a^{-1}b,b^{-1}a,a^{-1})$.  We deduce that $\varphi_1(H_1)=\G$, and thus $\varphi_x(H_1)=\G$ for any $x\in X$. Hence $H_1$ acts level-transitively on all subtrees rooted at a level one vertex. By induction it follows that the restriction of $H_n$ to subtrees rooted at an $n$th level vertex contains~$\G$ and thus the required level-transitivity of~$H_n$ follows.
\end{proof}

As a straightforward application of~\cite[Thm.~7.5]{LN}, since $\G$ is saturated we obtain the following. 
\begin{cor}
The automorphism group $\Aut(\G)$ of the second Grigorchuk group coincides with the normaliser of $\G$ in $\Aut(T)$, that is, $\Aut(\G)=N_{\Aut(T)}(\G)$.
\end{cor}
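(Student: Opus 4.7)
The plan is to invoke Theorem 7.5 of Lavreniuk--Nekrashevych directly, since the heavy lifting has already been done in the preceding theorem showing that $\mathcal{G}$ is saturated. The statement of that cited theorem says, roughly, that if $G$ is a (weakly) branch group acting on a rooted tree and $G$ is saturated, then every abstract automorphism of $G$ extends to a tree automorphism normalising~$G$; consequently the natural map $N_{\Aut(T)}(\G)/C_{\Aut(T)}(\G) \hookrightarrow \Aut(\G)$ is surjective, and since weakly branch groups have trivial centraliser in $\Aut(T)$, one concludes $\Aut(\G) \cong N_{\Aut(T)}(\G)$.

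First I would check that $\G$ satisfies the hypotheses of \cite[Thm.~7.5]{LN}. The group $\G$ is regular branch over $\gamma_3(\G)$ (recorded at the beginning of Section~3 from \cite{Pervova}), so in particular it is a branch group and therefore weakly branch. The saturation hypothesis is precisely the content of the theorem immediately preceding the corollary, where we produced the characteristic chain $H_n = H_{n-1}'$ with $H_0=\G$ acting level-transitively on every $n$th level subtree.

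Next I would invoke the cited theorem: since $\G$ is weakly branch and saturated, \cite[Thm.~7.5]{LN} yields $\Aut(\G) = N_{\Aut(T)}(\G)$. The subtlety that might require a quick line of comment is the identification of $\Aut(\G)$ with a subgroup of $\Aut(T)$, which relies on the fact that a weakly branch group has trivial centraliser in $\Aut(T)$ (so that the action of the normaliser on~$\G$ is faithful); this is standard and is built into the statement of the theorem in \cite{LN}.

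The main (and only) obstacle is verifying that the hypotheses of \cite[Thm.~7.5]{LN} are met; once that is done, there is nothing further to prove. Since both hypotheses -- branchness and saturation -- are already established in this section, the argument amounts to essentially one line of citation, which matches the phrasing of the corollary as a \emph{straightforward} application.
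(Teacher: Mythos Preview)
Your proposal is correct and matches the paper's approach exactly: the corollary is stated as a ``straightforward application of~\cite[Thm.~7.5]{LN}'' using the saturation established in the preceding theorem, and you have identified both required hypotheses (regular branch, hence weakly branch, and saturated). There is nothing to add.
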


%%%

\section{Hausdorff dimension of the closure of $\mathcal{G}$}

This section is devoted to determining the Hausdorff dimension of the closure of~$\mathcal{G}$.

\begin{proof}[Proof of Theorem~\ref{thm:main_intro}]
(i) For convenience, we write $\G_n= \G/\St_{\G}(n)$. It is easy to see that $|\G_1|=4$. For $n=2$, as stated in the proof of Theorem~\ref{thm:St-2}, we have $|\G_2|=4^3$.
For $n=3$, the result follows from Theorem~\ref{thm:St-2}.

Now we let $n \geq 4$. As $\St_{\G}(3) \leq \gamma_3(\G)$ and $\mathcal{G}$ is regular branch over $\gamma_3(\G)$, we have
\begin{align*}
|\G_n|
& =|\G:\St_{\G}(1)|\cdot|\St_{\G}(1):\St_{\G}(n)| \\
&= 4\cdot \frac{|\G \times \overset{4}{\dots} \times \G :\St_{\G}(n-1) \times \overset{4}{\dots} \times \St_{\G}(n-1)|}{|\G \times \overset{4}{\dots} \times \G : \psi(\St_{\G}(1))}| \\
&= 4 \cdot \frac{|\G_{n-1}|^4}{|\G \times \overset{4}{\dots} \times \G : \psi(\St_{\G}(1))}|.
\end{align*}

Since
\begin{align*}
|\G \times \overset{4}{\dots} \times \G : \psi(\St_{\G}(1))| &= \frac{|\G \times \overset{4}{\dots} \times \G : \gamma_3(\G) \times \overset{4}{\dots} \times \gamma_3(\G)|}{|\psi(\St_{\G}(1)): \gamma_3(\G) \times \overset{4}{\dots} \times \gamma_3(\G)|}\\
&= \frac{|\G : \gamma_3(\G)|^4}{|\St_{\G}(1) :\gamma_3(\St_{\G}(1))|}\\
&= 4^{5},
\end{align*}
we have
\[
\log_4|\G_n|=-4+4\log_4|\G_{n-1}|,
\]
and the result follows by induction.

(ii) This follows immediately from (i). Indeed, if $n>2$, we have
\begin{align*}
\text{hdim}_\Gamma(\overline{\mathcal{G}})&=\varliminf_{n\to \infty}\,\frac{\log_4|\G_n|}{\log_4 |\Gamma:\St_{\Gamma}(n)|}\\
    &=\varliminf_{n\to \infty}\,\frac{\frac{1}{3}\big(  86\cdot4^{n-4} +4\big)}{\frac{4^n}{3}}\\
    &= \frac{86}{4^4}=\frac{43}{128},
\end{align*}
as required.
\end{proof}

\end{document}